\providecommand{\U}[1]{\protect\rule{.1in}{.1in}}
\newtheorem{theorem}{Theorem}
{}
\newtheorem{lemma}{Lemma}
{}
\newtheorem{proposition}{Proposition}
\newenvironment{proof}[1][Proof]{\textbf{#1.} }{\ \rule{0.5em}{0.5em}}
\begin{document}

\title{On the Non-Self-adjoint Sturm-Liouville Operators in the Space of Vector-Functions}
\author{Fulya \c{S}eref and O. A. Veliev\\{\small Department of Mathematics, Dogus University, Kadik\"{o}y, \ 34722,}\\{\small \ Istanbul, Turkey.}\ {\small E-mail: fseref@dogus.edu.tr;
oveliev@dogus.edu.tr}}
\date{}
\maketitle

\begin{abstract}
In this article we obtain asymptotic formulas for the eigenvalues and
eigenfunctions of the non-self-adjoint operator generated in $L_{2}^{m}\left[
0,1\right]  $ by the Sturm-Liouville equation with $m\times m$ matrix
potential and the boundary conditions whose scalar case ($m=1$) are strongly
regular. Using these asymptotic formulas, we find a condition on the potential
for which the root functions of this operator form a Riesz basis.

AMS Mathematics Subject Classification numbers: 34L10, 47E05.

Keywords: Differential operators, matrix potential, Riesz basis.

\end{abstract}

\section{Introduction and Preliminary Facts}

We consider the differential operator $L_{m}(Q)$ generated in the space
$L_{2}^{m}\left[  0,1\right]  $ by the differential expression
\begin{equation}
-\mathbf{y}^{^{\prime\prime}}(x)+Q\left(  x\right)  \mathbf{y}(x) \tag{1}%
\end{equation}
and the boundary conditions whose scalar case (the case $m=1$) are strongly
regular, where $\mathbf{y}\left(  x\right)  =\left(  y_{1}\left(  x\right)
,y_{2}\left(  x\right)  ,...,y_{m}\left(  x\right)  \right)  ^{T},$ $L_{2}%
^{m}\left[  0,1\right]  $ is the set of vector-functions $\mathbf{f}\left(
x\right)  =\left(  f_{1}\left(  x\right)  ,f_{2}\left(  x\right)
,...,f_{m}\left(  x\right)  \right)  $ with $f_{k}\in L_{2}\left[  0,1\right]
$ for $k=1,2,...,m$ and $Q(x)=\left(  b_{i,j}\left(  x\right)  \right)  $ is a
$m\times m$ matrix with the complex-valued square integrable entries
$b_{i,j}.$ The norm $\left\Vert .\right\Vert $ and inner product $(.,.)$ in
$L_{2}^{m}\left[  0,1\right]  $ are defined by%
\[
\left\Vert f\right\Vert =\left(  \int\limits_{0}^{1}\left\vert f\left(
x\right)  \right\vert ^{2}dx\right)  ^{\frac{1}{2}},\text{ }(f,g)=\int
\limits_{0}^{1}\left\langle f\left(  x\right)  ,g\left(  x\right)
\right\rangle dx,
\]
where $\left\vert .\right\vert $ and $\left\langle .,.\right\rangle $ are
respectively the norm and the inner product in $\mathbb{C}^{m}.$\bigskip

A. A. Shkalikov [8-12] proved that the root functions (eigenfunctions and
associated eigenfunctions) of the operators generated by an ordinary
differential expression with summable matrix coefficients and regular boundary
conditions form a Riesz basis with parenthesis and in parenthesis should be
included only the functions corresponding to splitting eigenvalues. L. M.
Luzhina [3] generalized this result for the boundary value problems when the
coefficients depend on the spectral parameter.

In [13], the differential operator $T_{t}(Q)$ generated in the space
$L_{2}^{m}\left[  0,1\right]  $ by the differential expression (1) and the
quasiperiodic conditions%
\[
\mathbf{y}^{^{\prime}}\left(  1\right)  =e^{it}\mathbf{y}^{^{\prime}}\left(
0\right)  ,\text{ }\mathbf{y}\left(  1\right)  =e^{it}\mathbf{y}\left(
0\right)
\]
for $t\in(0,2\pi)$ and $t\neq\pi$ was considered . In [13], we proved that the
eigenvalues $\lambda_{k,j}$ of $T_{t}(Q)$ lie in the $O\left(  \frac{\ln k}%
{k}\right)  $ neighborhoods of the eigenvalues of the operator $T_{t}(C),$
where%
\begin{equation}
C=%
%TCIMACRO{\tint \nolimits_{0}^{1}}%
%BeginExpansion
{\textstyle\int\nolimits_{0}^{1}}
%EndExpansion
Q\left(  x\right)  dx. \tag{2}%
\end{equation}
Note that, to obtain the asymptotic formulas of order $O(\frac{1}{k})$ for the
eigenvalues $\lambda_{k,j}$ of the differential operators generated by (1),
using the classical asymptotic expansions for the solutions of the matrix
equation $-Y^{^{\prime\prime}}+Q\left(  x\right)  Y=\lambda Y,$ it is required
\ that $\ Q$ be differentiable (see [1, 4, 5, 7]). The suggested method in
[13] gives the possibility of obtaining the asymptotic formulas of order
$O(k^{-1}\ln|k|)$ for the eigenvalues $\lambda_{k,j}$ and the normalized
eigenfunctions $\Psi_{k,j}(x)$ of $T_{t}(Q)$ when there is not any condition
about smoothness of the entries $b_{i,j}$ of $Q$. Besides, in [13], using the
asymptotic formulas, it was proved that if the eigenvalues of the matrix $C$
are simple, then the root functions of the operator $T_{t}(Q)$ form a Riesz
basis. Then, in papers [14-17], using the method of [13], we considered the
spectrum and basis property of the root functions of differential operators
generated in $L_{2}^{m}\left[  0,1\right]  $ by the differential expression of
arbitrary order and by the $t-$periodic, periodic, antiperiodic boundary
conditions and applied these investigations to the differential operators with
periodic matrix coefficients.

In this paper, we investigate the operator $L_{m}(Q)$ generated in $L_{2}%
^{m}\left[  0,1\right]  $ by (1) and the boundary conditions%
\begin{equation}
U_{i}(\mathbf{y})=\alpha_{i}\mathbf{y}^{(k_{i})}(0)+\alpha_{i,0}%
\mathbf{y}(0)+\beta_{i}\mathbf{y}^{(k_{i})}(1)+\beta_{i,0}\mathbf{y}%
(1)=0,\text{ \ }i=1,2 \tag{3}%
\end{equation}
whose scalar case (the case $m=1$)%
\begin{equation}
U_{i}(y)=\alpha_{i}y^{(k_{i})}(0)+\alpha_{i,0}y(0)+\beta_{i}y^{(k_{i}%
)}(1)+\beta_{i,0}y(1)=0,\text{ \ }i=1,2 \tag{4}%
\end{equation}
are strongly regular, where $0\leq k_{2}\leq k_{1}\leq1,$ $\alpha_{i}%
,\alpha_{i,0},\beta_{i},\beta_{i,0}$ are complex numbers and for each value of
the index $i$ at least one of the numbers $\alpha_{i},\beta_{i}$ is nonzero.

Recall that the conditions (4) are called regular if the numbers $\theta_{-1}$
and $\theta_{1}$ defined by the identity
\begin{equation}
\frac{\theta_{-1}}{s}+\theta_{0}+\theta_{1}s=\det\left[
\begin{array}
[c]{cc}%
(\alpha_{1}+s\beta_{1})\omega_{1}^{k_{1}} & (\alpha_{1}+\frac{1}{s}\beta
_{1})\omega_{2}^{k_{1}}\\
(\alpha_{2}+s\beta_{2})\omega_{1}^{k_{2}} & (\alpha_{2}+\frac{1}{s}\beta
_{2})\omega_{2}^{k_{2}}%
\end{array}
\right]  \tag{5}%
\end{equation}
are different from zero, where $\omega_{1}$ and $\omega_{2}$ are the distinct
square roots of $-1$ (see [7] p. 57). The regular boundary conditions are said
to be strongly regular if $\theta_{0}^{2}-4\theta_{1}\theta_{-1}\neq0$. If the
boundary conditions (4) is regular then the equality (5) is either%
\begin{equation}
\frac{\theta_{-1}}{s}+\theta_{0}+\theta_{1}s=\frac{1}{s}-s \tag{6}%
\end{equation}
or
\begin{equation}
\frac{\theta_{-1}}{s}+\theta_{0}+\theta_{1}s=a\left(  s+\frac{1}{s}\right)  +b
\tag{7}%
\end{equation}
(see p. 63 of [7]), where $a,b$ are complex numbers and $a\neq0.$

The eigenvalues of the operator $L_{1}(q)$ generated in the space
$L_{2}\left[  0,1\right]  $ by the differential expression $-y^{^{\prime
\prime}}(x)+q\left(  x\right)  y(x)$ and\ strongly regular boundary conditions
(4), where $q$ is a summable function, consist of the sequences $\{\rho
_{n}^{\left(  1\right)  }(q)\}$ and $\{\rho_{n}^{\left(  2\right)  }%
(q)\}$\ satisfying
\begin{equation}
\rho_{n}^{\left(  1\right)  }(q)=(2n\pi+\gamma_{1})^{2}+O(1),\text{ }\rho
_{n}^{\left(  2\right)  }(q)=(2n\pi+\gamma_{2})^{2}+O(1);\text{ \ }n\geq N>>1,
\tag{8}%
\end{equation}
where
\begin{equation}
\gamma_{j}=-i\ln\zeta_{j},\operatorname{Re}\gamma_{j}\in(-\pi,\pi],\text{
}\zeta_{1}\neq\zeta_{2} \tag{9}%
\end{equation}
and $\zeta_{1},$ $\zeta_{2}$\ are the roots of the equation
\begin{equation}
\theta_{1}\zeta^{2}+\theta_{0}\zeta+\theta_{-1}=0 \tag{10}%
\end{equation}
(see [7], p. 65, formulas (45a), (45b)).

One can readily see that in case (6) we have
\begin{equation}
\gamma_{1}=0,\gamma_{2}=\pi. \tag{11}%
\end{equation}
In case (7), equation (10) has the form
\[
\zeta^{2}+\frac{b}{a}\zeta+1=0,
\]
that is, $\zeta_{1}\zeta_{2}=1$ and by (9) $\zeta_{1}\neq$ $\zeta_{2}$ which
implies that $\zeta_{1}\neq\pm1$ and $\zeta_{2}\neq\pm1.$ Therefore, in case
(7) we have
\begin{equation}
\gamma_{1}=-\gamma_{2}\neq\pi k\text{.} \tag{12}%
\end{equation}

In this paper, first we prove that if the boundary conditions (4) are regular,
then the boundary conditions (3) are also regular (see Theorem 1). Then, as in
[13], we consider the operator $L_{m}(Q)$ as perturbation of $L_{m}(C)$\ by
$Q-C$ and obtain asymptotic formulas for the eigenvalues and eigenfunctions of
$L_{m}(Q)$ in term of the eigenvalues and eigenfunctions of $L_{m}(C),$ where
$C$ is defined in (2). Finally, using the obtained asymptotic formulas and the
theorem of Bari (see p.310 of [2]), we prove that if the eigenvalues of the
matrix $C$ are simple, then the root functions of the operator $L_{m}(Q)$ form
a Riesz basis.

\section{Main Results}

First, let us consider the boundary conditions (3).

\begin{theorem}
If the boundary conditions (4) are regular then the boundary conditions (3)
are also regular.
\end{theorem}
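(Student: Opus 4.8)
The plan is to show that the characteristic (Birkhoff) determinant attached to the vector boundary value problem (3) is, up to an irrelevant power of $\rho=\sqrt{\lambda}$, the $m$-th power of the scalar characteristic function appearing in (5). Regularity of (3) then follows at once, because the property ``the two extreme coefficients do not vanish'' is preserved under raising a Laurent polynomial to the $m$-th power.

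First I would set up the characteristic determinant of (3) in the same way as in the scalar theory (see [7]). The principal part of (1) is $-\mathbf{y}''=\lambda\mathbf{y}$, with fundamental system $\{\,e^{\omega_{j}\rho x}\mathbf{e}_{k}:j=1,2,\ k=1,\dots,m\,\}$, where $\mathbf{e}_{1},\dots,\mathbf{e}_{m}$ is the standard basis of $\mathbb{C}^{m}$, $\omega_{1},\omega_{2}$ are the square roots of $-1$, and one puts $s=e^{\omega_{1}\rho}$, so that $e^{\omega_{2}\rho}=1/s$. The essential point is that each functional $U_{i}$ of (3) acts on a vector-function componentwise, with the \emph{scalar} coefficients $\alpha_{i},\alpha_{i,0},\beta_{i},\beta_{i,0}$; hence
\[
U_{i}\!\left(e^{\omega_{j}\rho x}\mathbf{e}_{k}\right)=u_{ij}(\rho)\,\mathbf{e}_{k},\qquad u_{ij}(\rho)=\bigl(\alpha_{i}(\omega_{j}\rho)^{k_{i}}+\alpha_{i,0}\bigr)+\bigl(\beta_{i}(\omega_{j}\rho)^{k_{i}}+\beta_{i,0}\bigr)e^{\omega_{j}\rho},
\]
and $u_{ij}(\rho)$ is exactly the quantity that occurs for the scalar conditions (4). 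Ordering the $2m$ rows (the scalar components of $U_{1}$ and $U_{2}$) and the $2m$ columns (the fundamental solutions) in the obvious way, the boundary matrix of (3) thus becomes the $2\times2$ block matrix with $(i,j)$-block $u_{ij}(\rho)\,I_{m}$. Extracting the common factor $\rho^{k_{i}}$ from the $i$-th block-row and passing to the leading term as $\rho\to\infty$ replaces it by the $2\times2$ block matrix whose $(i,j)$-block is $(\alpha_{i}+s^{\varepsilon_{j}}\beta_{i})\,\omega_{j}^{k_{i}}\,I_{m}$, with $\varepsilon_{1}=1,\ \varepsilon_{2}=-1$; that is, the matrix in (5) with each entry multiplied by $I_{m}$.

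Next I would evaluate this block determinant. All four blocks are scalar multiples of $I_{m}$, so a simultaneous permutation of the $2m$ rows and of the $2m$ columns (an equal permutation, hence no change of sign) brings the block matrix to block-diagonal form consisting of $m$ identical copies of the $2\times2$ matrix in (5); therefore
\[
\det\!\begin{bmatrix}
(\alpha_{1}+s\beta_{1})\omega_{1}^{k_{1}}I_{m} & (\alpha_{1}+\tfrac{1}{s}\beta_{1})\omega_{2}^{k_{1}}I_{m}\\[3pt]
(\alpha_{2}+s\beta_{2})\omega_{1}^{k_{2}}I_{m} & (\alpha_{2}+\tfrac{1}{s}\beta_{2})\omega_{2}^{k_{2}}I_{m}
\end{bmatrix}=\left(\frac{\theta_{-1}}{s}+\theta_{0}+\theta_{1}s\right)^{m},
\]
so the characteristic function of the vector problem (3) is the $m$-th power of that of the scalar problem (4). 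Written as a Laurent polynomial in $s$, its lowest and highest terms are $\theta_{-1}^{m}s^{-m}$ and $\theta_{1}^{m}s^{m}$. By the definition recalled just after (5), the conditions (4) are regular precisely when $\theta_{-1}\neq0$ and $\theta_{1}\neq0$; then $\theta_{-1}^{m}\neq0$ and $\theta_{1}^{m}\neq0$, which is exactly the regularity of (3).

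The only genuinely delicate point I anticipate is the bookkeeping in the first step: writing down the characteristic determinant in the vector setting and justifying that the lower-order contributions of $\alpha_{i,0},\beta_{i,0}$ and the factors $\rho^{k_{i}}$ may be dropped (this is immediate when $k_{1}=k_{2}$ and is the usual Birkhoff reduction when $k_{1}>k_{2}$). Once that reduction is carried out, the block-determinant identity and the final conclusion are routine.
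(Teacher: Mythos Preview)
Your proposal is correct and follows essentially the same route as the paper: both arrive at the block matrix with entries $(\alpha_{i}+s^{\pm1}\beta_{i})\omega_{j}^{k_{i}}I_{m}$, show that its determinant equals $(\det M(1))^{m}=\bigl(\theta_{-1}/s+\theta_{0}+\theta_{1}s\bigr)^{m}$, and conclude $\Theta_{\pm m}=\theta_{\pm1}^{m}\neq0$. The only difference is cosmetic---you evaluate the block determinant by a simultaneous row/column permutation to block-diagonal form, whereas the paper uses Laplace expansion along rows $1$ and $m+1$ together with induction on $m$.
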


\begin{proof}
The conditions (3) are regular (see [7], p. 121) if the numbers $\Theta_{-m}$
, $\Theta_{m}$\ defined by the identity
\begin{equation}
\Theta_{-m}s^{-m}+\Theta_{-m+1}s^{m-1}+...+\Theta_{m}s^{m}=\det M(m) \tag{13}%
\end{equation}
are both different from zero, where%
\[
M(m)=\left[
\begin{array}
[c]{cc}%
(\alpha_{1}+s\beta_{1})\omega_{1}^{k_{1}}I & (\alpha_{1}+\frac{1}{s}\beta
_{1})\omega_{2}^{k_{1}}I\\
(\alpha_{2}+s\beta_{2})\omega_{1}^{k_{2}}I & (\alpha_{2}+\frac{1}{s}\beta
_{2})\omega_{2}^{k_{2}}I
\end{array}
\right]
\]
and $I$ is $m\times m$ identity matrix. One can easily see that the
intersection of the first and $(m+1)$-th rows and columns forms the matrix
\[
M(1)=\left[
\begin{array}
[c]{cc}%
(\alpha_{1}+s\beta_{1})\omega_{1}^{k_{1}} & (\alpha_{1}+\frac{1}{s}\beta
_{1})\omega_{2}^{k_{1}}\\
(\alpha_{2}+s\beta_{2})\omega_{1}^{k_{2}} & (\alpha_{2}+\frac{1}{s}\beta
_{2})\omega_{2}^{k_{2}}%
\end{array}
\right]
\]
and its complementary minor is $M(m-1).$ Moreover, the determinant of the
minors of $M(m)$ formed by intersection of the first and $(m+1)$-th rows and
other pairs of columns is zero, since the $2\times m$ matrix consisting of
these rows has the form%
\[
\left[
\begin{array}
[c]{cccccccccc}%
(\alpha_{1}+s\beta_{1})\omega_{1}^{k_{1}} & 0 & 0 & \cdot\cdot\cdot & 0 &
(\alpha_{1}+\frac{1}{s}\beta_{1})\omega_{2}^{k_{1}} & 0 & 0 & ... & 0\\
(\alpha_{2}+s\beta_{2})\omega_{1}^{k_{2}} & 0 & 0 & ... & 0 & (\alpha
_{2}+\frac{1}{s}\beta_{2})\omega_{2}^{k_{2}} & 0 & 0 & ... & 0
\end{array}
\right]  .
\]
Therefore using the Laplace's cofactor expansion along the first and
$(m+1)$-th rows we obtain%
\begin{equation}
\det M(m)=\det M(1)\det M(m-1). \tag{14}%
\end{equation}
By induction the formula (14) implies that%
\begin{equation}
\det M(m)=\left(  \det M(1)\right)  ^{m}. \tag{15}%
\end{equation}
Now it follows from (5),\ (6) and (7) that%
\[
\Theta_{m}=\left(  \theta_{1}\right)  ^{m}\text{, }\Theta_{-m}=\left(
\theta_{-1}\right)  ^{m}%
\]
which implies that the boundary conditions (3) are regular if (4) are regular.
\end{proof}

By (10), (13) and (15), $\zeta_{1}$ and $\zeta_{2}$\ are the roots of the
equation
\[
\Theta_{-m}\zeta^{-m}+\Theta_{-m+1}\zeta^{m-1}+...+\Theta_{m}\zeta^{m}=0
\]
with multiplicity $m.$ Therefore it follows from Theorem 2 in p.123 of [7]
that to each root $\zeta_{1}$ and $\zeta_{2}$ correspond $m$ sequences,
denoted by
\[
\{\lambda_{k,1}^{(1)}:k=N,N+1,...\mathbb{\}},\{\lambda_{k,2}^{(1)}%
:k=N,N+1,...\mathbb{\}},...,\{\lambda_{k,m}^{(1)}:k=N,N+1,...\mathbb{\}}%
\]
and%
\[
\{\lambda_{k,1}^{(2)}:k=N,N+1,...\mathbb{\}},\{\lambda_{k,2}^{(2)}%
:k=N,N+1,...\mathbb{\}},...,\{\lambda_{k,m}^{(2)}:k=N,N+1,...\mathbb{\}}%
\]
respectively, satisfying%
\begin{equation}
\lambda_{k,j}^{(1)}=(2k\pi+\gamma_{1})^{2}+O(k^{1-\frac{1}{m}})\text{,
}\lambda_{k,j}^{(2)}=(2k\pi+\gamma_{2})^{2}+O(k^{1-\frac{1}{m}}) \tag{16}%
\end{equation}
for $k=N,N+1,...$ and $j=1,2,...,m,$ where $N\gg1.$

Now to analyze the operators $L_{m}(0),$ $L_{m}(C)$ and $L_{m}(Q),$ we
introduce the following notations. To simplify the notations we omit the upper
indices in $\rho_{n}^{\left(  1\right)  }(0),$ $\rho_{n}^{\left(  2\right)
}(0),$ $\lambda_{k,j}^{(1)},$ $\lambda_{k,j}^{(2)}$ (see (8) and (16)) and
enumerate these eigenvalues in the following way%
\begin{equation}
\rho_{n}^{\left(  1\right)  }(0)=:\rho_{n},\text{ }\rho_{n}^{\left(  2\right)
}(0)=:\rho_{-n},\text{ }\lambda_{k,j}^{(1)}=:\lambda_{k,j},\lambda_{k,j}%
^{(2)}=:\lambda_{-k,j} \tag{17}%
\end{equation}
for $n>0$ and $k\geq N\gg1.$ We remark that there is one-to-one correspondence
between the eigenvalues (counting with multiplicities) of the operator
$L_{1}(0)$ and integers which preserve asymptotic (8). This statement can
easily be proved in a standard way by using Rouche's theorem (we omit the
proof of this fact, since it is used only to simplify the notations). Denote
the normalized eigenfunction of the operator $L_{1}(0)$ corresponding to the
eigenvalue $\rho_{n}$ by $\varphi_{n}$. Clearly,%
\begin{equation}
\varphi_{n,1}=(\varphi_{n},0,0,...0)^{T},\text{ }\varphi_{n,2}=(0,\varphi
_{n},0,...0)^{T},...,\text{ }\varphi_{n,m}=(0,0,...0,\varphi_{n})^{T} \tag{18}%
\end{equation}
are the eigenfunctions of the operator $L_{m}(0)$ corresponding to the
eigenvalue $\rho_{n}$. Similarly,%
\begin{equation}
\varphi_{n,1}^{\ast}=(\varphi_{n}^{\ast},0,0,...0)^{T},\text{ }\varphi
_{n,2}^{\ast}=(0,\varphi_{n}^{\ast},0,...0)^{T},...,\text{ }\varphi
_{n,m}^{\ast}=(0,0,...0,\varphi_{n}^{\ast})^{T} \tag{19}%
\end{equation}
are the eigenfunctions of the operator $L_{m}^{\ast}(0)$ corresponding to the
eigenvalue $\overline{\rho_{n}}$, where $\varphi_{n}^{\ast}$ is the
eigenfunction of $L_{1}^{\ast}(0)$ corresponding to the eigenvalue
$\overline{\rho_{n}}.$

Since the boundary conditions (4) are strongly regular, all eigenvalues of
sufficiently large modulus\ of $L_{1}(q)$ are simple (see the end of \ Theorem
2 of p.65, [7]). Therefore, there exists $n_{0}$ such that the eigenvalues
$\rho_{n}$ of $L_{1}(0)$ are simple for $\left\vert n\right\vert >n_{0}.$
However, the operator $L_{1}(0)$ may have associated functions $\varphi
_{n}^{\left(  1\right)  },$ $\varphi_{n}^{\left(  2\right)  },...,\varphi
_{n}^{\left(  t(n)\right)  }$ corresponding to the eigenfunction $\varphi_{n}$
for $\left\vert n\right\vert \leq n_{0}.$ Then, it is not hard to see that
$L_{m}(0)$ has associated functions%
\[
\varphi_{n,1,p}=(\varphi_{n}^{\left(  p\right)  },0,0,...0)^{T},\varphi
_{n,2,p}=(0,\varphi_{n}^{\left(  p\right)  },0,...0)^{T},...,\varphi
_{n,m,p}=(0,0,...0,\varphi_{n}^{\left(  p\right)  })^{T},
\]
for $p=1,2,...,t(n)$ corresponding to $\rho_{n}$ for $\left\vert n\right\vert
\leq n_{0},$ that is,%
\begin{align*}
(L_{m}(0)-\rho_{n})\varphi_{n,i,0}  &  =0,\\
(L_{m}(0)-\rho_{n})\varphi_{n,i,p}  &  =\varphi_{n,i,p-1},\text{
\ }p=1,2,...,t(n),
\end{align*}
where $\varphi_{n,i,0}(x)$ $=:\varphi_{n,i}(x).$ Since the system of the root
functions of $L_{1}(0)$ forms Riesz basis in $L_{2}\left(  0,1\right)  $ (see
[6]), the system%
\begin{equation}
\{\varphi_{n,i,p}:n\in\mathbb{Z},\text{ }i=1,2,...,m,\text{ }p=1,2,...,t(n)\}
\tag{20}%
\end{equation}
forms a Riesz basis in $L_{2}^{m}\left(  0,1\right)  .$ The system,%
\begin{equation}
\{\varphi_{n,i,p}^{\ast}:n\in\mathbb{Z},\text{ }i=1,2,...,m,\text{
}p=1,2,...,t(n)\} \tag{21}%
\end{equation}
which is biorthogonal to $\{\varphi_{n,i,p}\}$ is the system of the
eigenfunctions and the associated functions of the adjoint operator
$L_{m}^{\ast}(0).$ Clearly, (21) can be constructed by repeating the
construction of (20)\ and replacing everywhere $\varphi_{n}$ by $\varphi
_{n}^{\ast}.$ Thus%
\begin{align}
(L_{m}^{\ast}(0)-\overline{\rho_{n}})\varphi_{n,i,0}^{\ast}  &  =0,\tag{22}\\
(L_{m}^{\ast}(0)-\overline{\rho_{n}})\varphi_{n,i,p}^{\ast}  &  =\varphi
_{n,i,p-1}^{\ast},\text{ \ }p=1,2,...,t(n). \tag{23}%
\end{align}

To prove the main results, we need the following properties of the
eigenfunctions $\varphi_{n}$ and $\varphi_{n}^{\ast}.$

\begin{proposition}
If the boundary conditions (4) are strongly regular then there exists a
positive constant $M$ such that%
\begin{equation}
\sup_{x\in\lbrack0,1]}\left\vert \varphi_{n}(x)\right\vert \leq M,\text{ }%
\sup_{x\in\lbrack0,1]}\left\vert \varphi_{n}^{\ast}(x)\right\vert \leq M,
\tag{24}%
\end{equation}%
\begin{equation}
\sup_{x\in\lbrack0,1]}\left\vert \varphi_{n,i,p}(x)\right\vert \leq M,\text{
}\sup_{x\in\lbrack0,1]}\left\vert \varphi_{n,i,p}^{\ast}(x)\right\vert \leq M,
\tag{25}%
\end{equation}
for all $n,i,p,$ where $\varphi_{n}^{\ast}$ is the eigenfunctions of
$L_{1}^{\ast}(0),$ satisfying%
\begin{equation}
\left(  \varphi_{n},\varphi_{n}^{\ast}\right)  =1 \tag{26}%
\end{equation}
for $\left\vert n\right\vert >n_{0}.$ Moreover, the following asymptotic
formulas hold%
\begin{align}
\overline{\varphi_{n}^{\ast}(x)}\varphi_{n}(x)  &  =1+A_{1}e^{i(4\pi
n+2\gamma_{1})x}+B_{1}e^{-i(4\pi n+2\gamma_{1})x}+O(\frac{1}{n}),n>0,\tag{27}%
\\
\overline{\varphi_{n}^{\ast}(x)}\varphi_{n}(x)  &  =1+A_{2}e^{i(4\pi
n+2\gamma_{2})x}+B_{2}e^{-i(4\pi n+2\gamma_{2})x}+O(\frac{1}{n}),n<0,\nonumber
\end{align}
where $A_{j}$ and $B_{j}$ for $j=1,2$ are constants.
\end{proposition}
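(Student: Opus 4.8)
The plan is to observe that (24)--(27) concern only the scalar operators $L_{1}(0)$ and $L_{1}^{\ast}(0)$, each of which is $-d^{2}/dx^{2}$ with boundary conditions of the type (4) (the adjoint of a regular problem (4) is again regular, p.~63 of [7]); the vector statement (25) reduces to the scalar one because $\varphi_{n,i,p}$ has the single nonzero component $\varphi_{n}$ (if $p=0$) or $\varphi_{n}^{(p)}$ (if $p\geq1$), and likewise for $\varphi_{n,i,p}^{\ast}$. For a constant-coefficient second order equation the eigenfunctions are explicit, so I would argue as follows. For $\left\vert n\right\vert >n_{0}$ the eigenvalue $\rho_{n}$ is simple, hence $-y^{\prime\prime}=\rho_{n}y$ has, up to a scalar, a unique solution satisfying both boundary conditions, namely $\widetilde{\varphi}_{n}(x)=c_{1,n}e^{i\mu_{n}x}+c_{2,n}e^{-i\mu_{n}x}$, where $\mu_{n}^{2}=\rho_{n}$ and, by (8), one may take $\mu_{n}=2\pi n+\gamma_{1}+O(1/n)$ for $n>0$ (and the corresponding branch built from the second sequence in (8) for $n<0$), while $(c_{1,n},c_{2,n})$ is a unit vector spanning the one-dimensional kernel of the $2\times2$ matrix obtained by substituting $e^{\pm i\mu_{n}x}$ into $U_{1},U_{2}$; then $\varphi_{n}=\widetilde{\varphi}_{n}/\left\Vert \widetilde{\varphi}_{n}\right\Vert $. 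Likewise, the eigenvalue of $L_{1}^{\ast}(0)$ at index $n$ is $\overline{\rho_{n}}$, with eigenfunction $\widetilde{\psi}_{n}(x)=e_{1,n}e^{i\overline{\mu_{n}}x}+e_{2,n}e^{-i\overline{\mu_{n}}x}$ for a unit kernel vector $(e_{1,n},e_{2,n})$.

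For (24) and (25): since $\operatorname{Im}\mu_{n}=\operatorname{Im}\gamma_{j}+O(1/n)$ is bounded, say $\left\vert \operatorname{Im}\mu_{n}\right\vert \leq h$ for $\left\vert n\right\vert >n_{0}$, and $\left\vert c_{i,n}\right\vert \leq1$, we have $\left\vert \widetilde{\varphi}_{n}(x)\right\vert \leq e^{-x\operatorname{Im}\mu_{n}}+e^{x\operatorname{Im}\mu_{n}}\leq2e^{h}$ on $[0,1]$, and similarly $\sup_{x}\left\vert \widetilde{\psi}_{n}(x)\right\vert \leq2e^{h}$. Expanding $\left\Vert \widetilde{\varphi}_{n}\right\Vert ^{2}=\int_{0}^{1}|\widetilde{\varphi}_{n}|^{2}$, the ``diagonal'' part is $\int_{0}^{1}(|c_{1,n}|^{2}e^{-2x\operatorname{Im}\mu_{n}}+|c_{2,n}|^{2}e^{2x\operatorname{Im}\mu_{n}})\,dx\geq e^{-2h}$, and the cross part is $2\operatorname{Re}(c_{1,n}\overline{c_{2,n}}\int_{0}^{1}e^{-2ix\operatorname{Re}\mu_{n}}dx)=O(1/n)$ because $\left\vert \operatorname{Re}\mu_{n}\right\vert \to\infty$; hence $\left\Vert \widetilde{\varphi}_{n}\right\Vert \geq\frac{1}{2}e^{-h}$ for large $\left\vert n\right\vert $, and the same for $\widetilde{\psi}_{n}$, so $\sup_{x}\left\vert \varphi_{n}(x)\right\vert \leq4e^{2h}$. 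Next, for $\left\vert n\right\vert >n_{0}$ the eigenvalue $\rho_{n}$ is simple and $L_{1}(0)$ has discrete spectrum, so the pairing of its $\rho_{n}$-eigenspace with the $\overline{\rho_{n}}$-eigenspace of $L_{1}^{\ast}(0)$ is nondegenerate, i.e. $(\varphi_{n},\widetilde{\psi}_{n})\neq0$; thus $\varphi_{n}^{\ast}:=\widetilde{\psi}_{n}/\overline{(\varphi_{n},\widetilde{\psi}_{n})}$ is the unique eigenfunction of $L_{1}^{\ast}(0)$ with $(\varphi_{n},\varphi_{n}^{\ast})=1$, which is (26), and it is the member of the biorthogonal system (21) indexed by $(n,i,0)$. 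Since (20) is a Riesz basis so is (21), hence $\left\Vert \varphi_{n}^{\ast}\right\Vert $ is bounded above and away from $0$; together with $\left\Vert \widetilde{\psi}_{n}\right\Vert $ being bounded above and away from $0$ this forces $\left\vert (\varphi_{n},\widetilde{\psi}_{n})\right\vert $ to be bounded above and away from $0$, and therefore $\sup_{x}\left\vert \varphi_{n}^{\ast}(x)\right\vert =\sup_{x}\left\vert \widetilde{\psi}_{n}(x)\right\vert /\left\vert (\varphi_{n},\widetilde{\psi}_{n})\right\vert $ is bounded. Absorbing the finitely many indices with $\left\vert n\right\vert \leq n_{0}$, and the finitely many associated functions there, into a larger $M$ gives (24) and (25).

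For (27): as $\left\Vert \widetilde{\varphi}_{n}\right\Vert $ is real, the formulas above give the normalization-free identity
\[
\overline{\varphi_{n}^{\ast}(x)}\,\varphi_{n}(x)=\frac{\overline{\widetilde{\psi}_{n}(x)}\,\widetilde{\varphi}_{n}(x)}{(\widetilde{\varphi}_{n},\widetilde{\psi}_{n})}.
\]
Using $\overline{\widetilde{\psi}_{n}(x)}=\overline{e_{1,n}}e^{-i\mu_{n}x}+\overline{e_{2,n}}e^{i\mu_{n}x}$ and multiplying out, the numerator equals $\sigma_{n}+c_{1,n}\overline{e_{2,n}}e^{2i\mu_{n}x}+c_{2,n}\overline{e_{1,n}}e^{-2i\mu_{n}x}$ with $\sigma_{n}:=c_{1,n}\overline{e_{1,n}}+c_{2,n}\overline{e_{2,n}}$, and integrating it gives $(\widetilde{\varphi}_{n},\widetilde{\psi}_{n})=\sigma_{n}+O(1/\left\vert \mu_{n}\right\vert )$; by the previous step $\left\vert \sigma_{n}\right\vert $ is bounded away from $0$. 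The kernel directions $(c_{1,n}:c_{2,n})$ and $(e_{1,n}:e_{2,n})$ converge: after division by the leading power of $\mu_{n}$ the boundary matrices converge to the matrix in (5) evaluated at $s=\zeta_{j}$, whose $i$-th row is $\big(\omega_{1}^{k_{i}}(\alpha_{i}+\beta_{i}\zeta_{j}),\,\omega_{2}^{k_{i}}(\alpha_{i}+\beta_{i}\zeta_{j}^{-1})\big)$, and if both rows vanished then $\alpha_{i}+\beta_{i}\zeta_{j}=\alpha_{i}+\beta_{i}\zeta_{j}^{-1}=0$ for $i=1,2$, which by (5) and the dichotomy (11)--(12) (together with the hypothesis that some $\alpha_{i},\beta_{i}$ is nonzero) contradicts the regularity of (4); a nonzero row pins down the (convergent) kernel direction. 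Choosing the phases so that $(c_{1,n},c_{2,n})$ and $(e_{1,n},e_{2,n})$ converge, we get $\sigma_{n}\to\sigma^{(j)}\neq0$, $c_{1,n}\overline{e_{2,n}}\to u_{j}$, $c_{2,n}\overline{e_{1,n}}\to v_{j}$; and for $n>0$, $2\mu_{n}=4\pi n+2\gamma_{1}+O(1/n)$, so $e^{\pm2i\mu_{n}x}=e^{\pm i(4\pi n+2\gamma_{1})x}+O(1/n)$ uniformly on $[0,1]$ ($\operatorname{Im}\mu_{n}$ being bounded). Dividing the numerator by $\sigma_{n}+O(1/n)$ now yields
\[
\overline{\varphi_{n}^{\ast}(x)}\varphi_{n}(x)=1+\frac{u_{1}}{\sigma^{(1)}}e^{i(4\pi n+2\gamma_{1})x}+\frac{v_{1}}{\sigma^{(1)}}e^{-i(4\pi n+2\gamma_{1})x}+O(1/n),
\]
which is the first line of (27) with $A_{1}=u_{1}/\sigma^{(1)}$, $B_{1}=v_{1}/\sigma^{(1)}$; the case $n<0$ is identical, using the second sequence in (8) and $\gamma_{2}$ in place of $\gamma_{1}$.

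The main obstacle is the step asserting that $\varphi_{n}$ and $\varphi_{n}^{\ast}$ are not asymptotically orthogonal --- equivalently, that $\left\vert (\varphi_{n},\widetilde{\psi}_{n})\right\vert $ (hence $\left\vert \sigma_{n}\right\vert $) stays bounded away from $0$ --- since otherwise both the bound for $\varphi_{n}^{\ast}$ in (24) and the $O(1/n)$ remainder in (27) fail. I would settle it, as above, by invoking the cited fact that the root system of $L_{1}(0)$, and hence its biorthogonal system, is a Riesz basis. The convergence of the boundary-matrix kernels, needed only to display honest constants $A_{j},B_{j}$ in (27), is a milder technical point that follows from the classical asymptotics of regular boundary value problems in [7].
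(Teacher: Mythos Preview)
Your argument is correct and reaches the same conclusions, but by a different route than the paper. The paper proceeds by a short case analysis: it recalls (p.~62--63 of [7]) that strongly regular conditions (4) reduce to Dirichlet, or to the form (28), or to the form (29); in each case it writes down the explicit eigenfunctions or their asymptotics (formulas (30)--(34)), from which (24) is read off directly, (25) then follows trivially from (24), and (27) is obtained by multiplying the displayed expressions and using the normalization (26). In particular, the paper never invokes the Riesz basis property of (20)--(21) to bound $\varphi_{n}^{\ast}$; instead it gets the bound from the explicit formulas (32)--(34).

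Your approach trades the case split for a single unified computation with $e^{\pm i\mu_{n}x}$, and handles the delicate point---that $\left\vert (\varphi_{n},\widetilde{\psi}_{n})\right\vert$ stays bounded away from $0$---by appealing to the already established Riesz basis property of (20), hence of its biorthogonal system (21). This is legitimate since that fact is cited from [6] before the Proposition, and it makes the argument more conceptual, at the mild cost of an extra abstract step. The convergence of the kernel directions of the boundary matrices (needed to produce genuine constants $A_{j},B_{j}$ rather than $n$-dependent ones) is the only place where you must still appeal to the classical $O(1/n)$ asymptotics from [7]; you acknowledge this, and it is exactly what the paper's formulas (31)--(34) encode case by case. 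Either method is adequate here; the paper's is shorter because it simply quotes known explicit formulas, while yours would transfer more readily to situations where such explicit case formulas are unavailable.
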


\begin{proof}
It is well-known that (see p. 62-63 of [7]) if the boundary conditions (4) are
strongly regular then these are either Dirichlet boundary conditions:
$y(1)=y(0)=0$ or%
\begin{gather}
y^{^{\prime}}(0)+\alpha_{11}y(0)+\alpha_{12}y(1)=0,\tag{28}\\
y^{^{\prime}}(1)+\alpha_{21}y(0)+\alpha_{22}y(1)=0\nonumber
\end{gather}
or%
\begin{align}
a_{1}y^{^{\prime}}(0)+b_{1}y^{^{\prime}}(1)+a_{0}y(0)+b_{0}y(1)  &
=0,\tag{29}\\
c_{0}y(0)+d_{0}y(1)  &  =0.\nonumber
\end{align}
In the case of Dirichlet boundary conditions we have%
\begin{equation}
\varphi_{n}(x)=\varphi_{n}^{\ast}(x)=\sqrt{2}\sin2nx,\text{ }\varphi
_{-n}(x)=\varphi_{-n}^{\ast}(x)=\sqrt{2}\sin(2n+1)x, \tag{30}%
\end{equation}
where $n=1,2,...$. In case (28) using the well known expression for
eigenfunction%
\[
\left\vert
\begin{array}
[c]{cc}%
e^{i\rho_{n}x} & e^{-i\rho_{n}x}\\
U_{1}(e^{i\rho_{n}x}) & U_{1}(e^{-i\rho_{n}x})
\end{array}
\right\vert
\]
and (8), (17), and also taking into account that $\gamma_{1}=0$ and
$\gamma_{2}=\pi$ (see (6) and (11)), we obtain%
\begin{equation}
\varphi_{n}(x)=\sqrt{2}\cos2nx+O(\frac{1}{n}),\text{ }\varphi_{-n}(x)=\sqrt
{2}\cos(2n+1)x+O(\frac{1}{n}) \tag{31}%
\end{equation}
and%
\begin{equation}
\varphi_{n}^{\ast}(x)=\sqrt{2}\cos2nx+O(\frac{1}{n}),\text{ }\varphi
_{-n}^{\ast}(x)=\sqrt{2}\cos(2n+1)x+O(\frac{1}{n}). \tag{32}%
\end{equation}
In the same way in case (29) we get the formulas%
\begin{align}
\varphi_{n}(x)  &  =a^{+}e^{i(2\pi n+\gamma_{1})x}+b^{+}e^{-i(2\pi
n+\gamma_{1})x}+O(\frac{1}{n}),\tag{33}\\
\varphi_{n}^{\ast}(x)  &  =c^{+}e^{i(2\pi n+\gamma_{1})x}+d^{+}e^{-i(2\pi
n+\gamma_{1})x}+O(\frac{1}{n})\nonumber
\end{align}
for $n>0$ and%
\begin{align}
\varphi_{n}(x)  &  =a^{-}e^{i(2\pi n+\gamma_{2})x}+b^{-}e^{-i(2\pi
n+\gamma_{2})x}+O(\frac{1}{n}),\tag{34}\\
\varphi_{n}^{\ast}(x)  &  =c^{-}e^{i(2\pi n+\gamma_{2})x}+d^{-}e^{-i(2\pi
n+\gamma_{2})x}+O(\frac{1}{n})\nonumber
\end{align}
for $n<0$. Thus in any case inequality (24) holds. Equality (25) follows from
(24)\ and equality (27) follows from (26), (30)-(34).
\end{proof}

As it is noted in the introduction, we obtain asymptotic formulas for the
eigenvalues and eigenfunctions of $L_{m}(Q)$ in term of the eigenvalues and
eigenfunctions of $L_{m}(C).$ Therefore first we analyze the eigenvalues and
eigenfunctions of $L_{m}(C).$ Suppose that the matrix $C$ has $m$ simple
eigenvalues $\mu_{1},\mu_{2},...,\mu_{m}$. The normalized eigenvector
corresponding to the eigenvalue $\mu_{j}$ is denoted by $v_{j}.$ In these
notations the eigenvalues and eigenfunctions of $L_{m}(C)$ are
\begin{equation}
\mu_{k,j}=\rho_{k}+\mu_{j}\text{ }\And\Phi_{k,j}(x)=v_{j}\varphi_{k}(x)
\tag{35}%
\end{equation}
respectively. Similarly, the eigenvalues and eigenfunctions of $(L_{m}%
(C))^{\ast}$ are $\overline{\mu_{k,j}},$ $\Phi_{k,j}^{\ast}(x)=v_{j}^{\ast
}\varphi_{k}^{\ast},$ where $v_{j}^{\ast}$ is the eigenvector of \ $C^{\ast}$
corresponding to $\overline{\mu_{j}}$ such that $\left(  v_{j}^{\ast}%
,v_{j}\right)  =1.$ To obtain the asymptotic formulas for the eigenvalues and
eigenfunctions of $L_{m}(Q),$ we use the following formula%
\begin{equation}
(\lambda_{k,j}-\mu_{k,i})(\Psi_{k,j},\Phi_{k,i}^{\ast})=((Q-C)\Psi_{k,j}%
,\Phi_{k,i}^{\ast}) \tag{36}%
\end{equation}
obtained from%
\begin{equation}
L_{m}(Q)\Psi_{k,j}(x)=\lambda_{k,j}\Psi_{k,j}(x) \tag{37}%
\end{equation}
by multiplying both sides of (37) with $\Phi_{k,i}^{\ast}(x)$ and using
$L_{m}(Q)=L_{m}(C)+(Q-C)$. To prove that $\lambda_{k,j}$ is close to
$\mu_{k,j},$ we first show that the right-hand side of (36) is a small number
for all $j$ and $i$\ (see Lemma 1) and then we prove that for each
eigenfunction $\Psi_{k,j}$ of $L_{m}(Q)$, where $|k|\geq N,$ there exists a
root function of $(L_{m}(C)))^{\ast}$ denoted by $\Phi_{k,j}^{\ast}$ such that
$(\Psi_{k,j},\Phi_{k,j}^{\ast})$ is a number of order $1$ (see Lemma 2).
\ Before the proof of these lemmas, we need the following preparations:
Multiplying both sides of (37) by $\varphi_{n,i,0}^{\ast}$, using
$L_{m}(Q)=L_{m}(0)+Q$ and (22) we get%
\[
(\lambda_{k,j}-\rho_{n})(\Psi_{k,j},\varphi_{n,i,0}^{\ast})=(Q\Psi
_{k,j},\varphi_{n,i,0}^{\ast}),
\]%
\begin{equation}
(\Psi_{k,j},\varphi_{n,i,0}^{\ast})=\frac{(Q\Psi_{k,j},\varphi_{n,i,0}^{\ast
})}{\lambda_{k,j}-\rho_{n}} \tag{38}%
\end{equation}
for\ $i,j=1,2,...,m$ and $\lambda_{k,j}\neq\rho_{n}.$ Now multiplying (37) by
$\varphi_{n,i,1}^{\ast}$ and using (23), (38), we get
\[
(\lambda_{k,j}-\rho_{n})(\Psi_{k,j},\varphi_{n,i,1}^{\ast})=(Q\Psi
_{k,j},\varphi_{n,i,1}^{\ast})+\frac{(Q\Psi_{k,j},\varphi_{n,i,0}^{\ast}%
)}{\lambda_{k,j}-\rho_{n}},
\]%
\[
(\Psi_{k,j},\varphi_{n,i,1}^{\ast})=\frac{(Q\Psi_{k,j},\varphi_{n,i,1}^{\ast
})}{\lambda_{k,j}-\rho_{n}}+\frac{(Q\Psi_{k,j},\varphi_{n,i,0}^{\ast}%
)}{(\lambda_{k,j}-\rho_{n})^{2}}.
\]
In this way one can deduce the formulas
\begin{equation}
(\Psi_{k,j},\varphi_{n,i,s}^{\ast})=\sum_{p=0}^{s}\frac{(Q\Psi_{k,j}%
,\varphi_{n,i,p}^{\ast})}{(\lambda_{k,j}-\rho_{n})^{s+1-p}} \tag{39}%
\end{equation}
for $s=0,1,...,t(n).$

Since, $(Q\Psi_{k,j},\varphi_{n,i,p}^{\ast})=(\Psi_{k,j},Q^{\ast}%
\varphi_{n,i,p}^{\ast})$, $\left\Vert \Psi_{k,j}\right\Vert =1,$ and the
entries of $Q$ are the elements of $L_{2}\left(  0,1\right)  ,$ it follows
from (25) and Cauchy-Schwarz inequality that there exists a positive constant
$c_{1}$ such that
\begin{equation}
\left\vert (Q\Psi_{k,j},\varphi_{n,i,p}^{\ast})\right\vert <c_{1}. \tag{40}%
\end{equation}
In the subsequent estimates we denote by $c_{m}$ for $m=1,2,...,$ the positive
constants whose exact value are inessential. On the other hand, it follows
from (8), (16) and (17) that if $k$ is a sufficiently large number then%
\begin{equation}
\mid\lambda_{k,j}-\rho_{p}\mid>c_{2}k^{2},\forall p\leq n_{0}, \tag{41}%
\end{equation}%
\begin{equation}
\left\vert \lambda_{k,j}-\rho_{p}\right\vert >c_{3}\left\vert \left\vert
p\right\vert -\left\vert k\right\vert \right\vert (\left\vert p\right\vert
+\left\vert k\right\vert ),\forall p\neq\pm k, \tag{42}%
\end{equation}
and%
\begin{equation}
\left\vert \lambda_{k,j}-\rho_{-k}\right\vert >c_{4}\left\vert k\right\vert .
\tag{43}%
\end{equation}
Therefore by (38)-(43) we have%
\begin{equation}
\left\vert (\Psi_{k,j},\varphi_{p,q,s}^{\ast})\right\vert <c_{5}k^{-2},\forall
p\leq n_{0}, \tag{44}%
\end{equation}%
\begin{equation}
\mid(\Psi_{k,j},\varphi_{p,q}^{\ast})\mid\leq\frac{c_{6}}{\left\vert
\left\vert p\right\vert -\left\vert k\right\vert \right\vert (\left\vert
p\right\vert +\left\vert k\right\vert )},\text{ }\mid(\Psi_{k,j}%
,\varphi_{-k,q}^{\ast})\mid\leq\frac{c_{7}}{\left\vert k\right\vert } \tag{45}%
\end{equation}
for all $\left\vert k\right\vert \gg1,$ $p\neq\pm k,$ $s=0,1,...,t(p)$ and
$q,j=1,2,...,m.$

Now, we are ready to prove the lemmas.

\begin{lemma}
For any $i=1,2,...,m$ and $j=1,2,...,m$ the following estimation holds
\begin{equation}
\left(  \Psi_{k,j},(Q^{\ast}-C^{\ast})\Phi_{k,i}^{\ast}\right)  =O(\alpha
_{k})+O(\frac{\ln|k|}{k}), \tag{46}%
\end{equation}
where $\alpha_{k}=\max\left\{  \mid b_{s,i,2k,r}^{+}\mid,\mid b_{s,i,2k,r}%
^{-}\mid:s,i=1,2,...,m;\text{ }r=1,2\right\}  ,$%
\begin{equation}
b_{s,i,2k,r}^{\pm}=\int_{0}^{1}b_{s,i}(x)e^{\pm i(4\pi k+2\gamma_{r})x},
\tag{47}%
\end{equation}
$b_{s,i}\in L_{2}\left(  0,1\right)  $ are the entries of the matrix $Q,$ and
$\gamma_{r}$ is defined in (9).
\end{lemma}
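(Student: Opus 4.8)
The plan is to expand the eigenfunction $\Psi_{k,j}$ of $L_{m}(Q)$ along the Riesz basis $(20)$ and to evaluate the pairing term by term. The vector-function $(Q^{\ast}-C^{\ast})\Phi_{k,i}^{\ast}$ lies in $L_{2}^{m}(0,1)$, since the entries of $Q^{\ast}-C^{\ast}$ are square integrable and $\varphi_{k}^{\ast}$ is bounded by $(24)$; hence, by continuity of the inner product and the identity $(\varphi_{n,q,p},(Q^{\ast}-C^{\ast})\Phi_{k,i}^{\ast})=((Q-C)\varphi_{n,q,p},\Phi_{k,i}^{\ast})$,
\[
\left(\Psi_{k,j},(Q^{\ast}-C^{\ast})\Phi_{k,i}^{\ast}\right)=\sum_{n,q,p}(\Psi_{k,j},\varphi_{n,q,p}^{\ast})\left((Q-C)\varphi_{n,q,p},\Phi_{k,i}^{\ast}\right),
\]
the sum being over the index set of $(20)$. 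By $(24)$--$(25)$ one has $\sup_{x}|\varphi_{n,q,p}(x)|\le M$ and $\sup_{x}|\varphi_{k}^{\ast}(x)|\le M$, and the entries of $Q-C$ lie in $L_{1}(0,1)$; therefore each factor $((Q-C)\varphi_{n,q,p},\Phi_{k,i}^{\ast})$ is bounded in absolute value by a constant $c_{8}$, uniformly in all indices. I then split the sum into the ranges $|n|\le n_{0}$ (over all admissible $p$) and $|n|>n_{0}$ (where $\rho_{n}$ is simple, so $p=0$), the latter subdivided into $n=k$, $n=-k$, and $n\ne\pm k$.

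All but the range $n=k$ contribute only lower order terms. Over $|n|\le n_{0}$ the estimate $(44)$ gives $|(\Psi_{k,j},\varphi_{n,q,p}^{\ast})|<c_{5}k^{-2}$ on a finite index set, so this part is $O(k^{-2})$. For $n=-k$, $(45)$ gives $|(\Psi_{k,j},\varphi_{-k,q}^{\ast})|\le c_{7}|k|^{-1}$, which together with the bound $c_{8}$ makes this part $O(k^{-1})$. For $n\ne\pm k$ with $|n|>n_{0}$, $(45)$ gives $|(\Psi_{k,j},\varphi_{n,q}^{\ast})|\le c_{6}\big(||n|-|k||\,(|n|+|k|)\big)^{-1}$, so with $c_{8}$ this part is at most
\[
c_{6}c_{8}m\sum_{|n|>n_{0},\,n\ne\pm k}\frac{1}{||n|-|k||\,(|n|+|k|)}=O\left(k^{-1}\ln|k|\right),
\]
the last estimate being elementary: the summands with $|n|$ close to $|k|$ dominate and contribute a harmonic-type sum of size $k^{-1}\ln|k|$.

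The only range requiring a genuine computation is $n=k$. Here $|k|\ge N>n_{0}$, so $\rho_{k}$ is simple, $(\Psi_{k,j},\varphi_{k,q}^{\ast})=O(1)$ by $(24)$, and, putting $\widetilde b_{s,q}=b_{s,q}-c_{s,q}$ so that $\int_{0}^{1}\widetilde b_{s,q}\,dx=0$ by $(2)$, the shape of $\varphi_{k,q,0}$ and of $\Phi_{k,i}^{\ast}=v_{i}^{\ast}\varphi_{k}^{\ast}$ yields
\[
\left((Q-C)\varphi_{k,q,0},\Phi_{k,i}^{\ast}\right)=\int_{0}^{1}\overline{\varphi_{k}^{\ast}(x)}\,\varphi_{k}(x)\Big(\sum_{s=1}^{m}\overline{(v_{i}^{\ast})_{s}}\,\widetilde b_{s,q}(x)\Big)dx.
\]
Substituting the asymptotic formula $(27)$ for $\overline{\varphi_{k}^{\ast}(x)}\varphi_{k}(x)$: the constant term $1$ is killed upon integration because $\int_{0}^{1}\widetilde b_{s,q}\,dx=0$; each of the two exponential terms contributes $A_{r}$ (respectively $B_{r}$) times $\int_{0}^{1}\widetilde b_{s,q}(x)e^{\pm i(4\pi k+2\gamma_{r})x}\,dx=b_{s,q,2k,r}^{\pm}+O(k^{-1})$, the $O(k^{-1})$ coming from $c_{s,q}\int_{0}^{1}e^{\pm i(4\pi k+2\gamma_{r})x}\,dx$; and the uniform remainder $O(k^{-1})$ in $(27)$ integrates against $\widetilde b_{s,q}\in L_{1}(0,1)$ to $O(k^{-1})$. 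Since $|b_{s,q,2k,r}^{\pm}|\le\alpha_{k}$, each factor is $O(\alpha_{k})+O(k^{-1})$; multiplying by $(\Psi_{k,j},\varphi_{k,q}^{\ast})=O(1)$ and summing over the $m$ values of $q$, the $n=k$ contribution is $O(\alpha_{k})+O(k^{-1})$. Adding the four pieces gives $O(\alpha_{k})+O(k^{-1}\ln|k|)$, which is $(46)$.

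I expect the $n=k$ step to be the only one going beyond bookkeeping, and even it is routine once $(27)$ is available; its decisive feature is that $\int_{0}^{1}(Q-C)\,dx=0$ --- built into the definition $(2)$ of $C$ --- annihilates the would-be main term and leaves only something of size $\alpha_{k}$. What really needs care is the termwise evaluation of the pairing against the expansion $(20)$ (legitimate since that expansion converges unconditionally and each of the four sub-sums converges absolutely by the estimates above) and the correct accounting of the error terms; the substantive analytic input --- the uniform bounds $(24)$--$(25)$, the pointwise asymptotics $(27)$ of Proposition 1, and the coefficient estimates $(44)$--$(45)$ encoding the spectral separation of $L_{1}(0)$ --- is already at hand.
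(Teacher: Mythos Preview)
Your proof is correct and follows essentially the same strategy as the paper: the paper expands $(Q^{\ast}-C^{\ast})\varphi_{k,s}^{\ast}$ in the dual basis $(21)$ and then pairs with $\Psi_{k,j}$, whereas you expand $\Psi_{k,j}$ in the basis $(20)$ and pair with $(Q^{\ast}-C^{\ast})\Phi_{k,i}^{\ast}$, but by biorthogonality the two computations produce the identical bilinear sum $\sum (\Psi_{k,j},\varphi_{n,q,p}^{\ast})\,((Q-C)\varphi_{n,q,p},\Phi_{k,i}^{\ast})$. The splitting into $|n|\le n_{0}$, $n=k$, $n=-k$, $n\ne\pm k$ and the use of $(44)$, $(45)$ and $(27)$ to handle each piece match the paper's treatment exactly.
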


\begin{proof}
Since $\Phi_{k,i}^{\ast}(x)=v_{i}^{\ast}\varphi_{k}^{\ast}(x),$ it is enough
to prove that%
\begin{equation}
\left(  \Psi_{k,j},(Q^{\ast}-C^{\ast})\varphi_{k,s}^{\ast}\right)  =O\left(
\alpha_{k}\right)  +O(\frac{\ln|k|}{k}) \tag{48}%
\end{equation}
for $s=1,2,...,m.$ The decomposition of $(Q^{\ast}-C^{\ast})\varphi
_{k,s}^{\ast}$ by the basis (21) has the form%
\[
(Q^{\ast}-C^{\ast})\varphi_{k,s}^{\ast}=\sum\limits_{q=1}^{m}\sum
_{p:\left\vert p\right\vert \leq n_{0}}\sum\limits_{v=1}^{t(p)}%
c(k,s,p,q,v)\varphi_{p,q,v}^{\ast}+
\]%
\[
\sum\limits_{q=1,2,...m}\sum_{p:\left\vert p\right\vert >n_{0}}^{\infty
}((Q^{\ast}-C^{\ast})\varphi_{k,s}^{\ast},\varphi_{p,q})\varphi_{p,q}^{\ast}%
\]
Therefore%
\begin{align}
(\Psi_{k,j},(Q^{\ast}-C^{\ast})\varphi_{k,s}^{\ast})  &  =\sum\limits_{q=1}%
^{m}\sum_{p:\left\vert p\right\vert \leq n_{0}}\sum\limits_{v=1}%
^{t(p)}\overline{c(k,s,p,q,v)}(\Psi_{k,j},\varphi_{p,q,v}^{\ast})+\tag{49}\\
&  \sum_{q=1}^{m}\sum_{p:\left\vert p\right\vert >n_{0}}\overline{((Q^{\ast
}-C^{\ast})\varphi_{k,s}^{\ast},\varphi_{p,q})}(\Psi_{k,j},\varphi_{p,q}%
^{\ast})\nonumber
\end{align}
Since $c(k,s,p,q,v)=O(1),$ by (44) the first summation of the right hand side
of (49) is $O(k^{-2}).$

Now let us estimate the second summation $S$ of the right hand side of (49).
It can be written in the form
\begin{equation}
S=S_{1}+S_{2}, \tag{50}%
\end{equation}
where%
\begin{align*}
S_{1}  &  =\sum_{q=1}^{m}\overline{((Q^{\ast}-C^{\ast})\varphi_{k,s}^{\ast
},\varphi_{k,q})}(\Psi_{k,j},\varphi_{k,q}^{\ast}),\\
S_{2}  &  =\sum_{q=1}^{m}\sum_{p\neq k}\overline{((Q^{\ast}-C^{\ast}%
)\varphi_{k,s}^{\ast},\varphi_{p,q})}(\Psi_{k,j},\varphi_{p,q}^{\ast}).
\end{align*}
Using (18), (19), (25) and (27), one can easily verify that%
\begin{equation}
S_{1}=O\left(  \alpha_{k}\right)  +O(\frac{1}{k}) \tag{51}%
\end{equation}
On the other hand, by (45), we have%
\begin{equation}
S_{2}=O(\frac{\ln|k|}{k}) \tag{52}%
\end{equation}
Therefore, (48) follows from (49)-(52). The lemma is proved.
\end{proof}

\begin{lemma}
For each eigenfunction $\Psi_{k,j}$ of $L_{m}(Q)$, where $|k|\geq N,$ there
exists an eigenfunction of $(L_{m}(C))^{\ast}$ denoted by $\Phi_{k,j}^{\ast}$
such that%
\begin{equation}
\left\vert \left(  \Psi_{k,j},\Phi_{k,j}^{\ast}\right)  \right\vert >c_{8}.
\tag{53}%
\end{equation}

\end{lemma}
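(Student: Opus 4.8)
The plan is to expand the eigenfunction $\Psi_{k,j}$ in the Riesz basis (21) of root functions of $L_m^\ast(0)$ and show that the "diagonal block" — the terms indexed by $p=\pm k$ — carries essentially all the mass, forcing at least one coefficient against a $\Phi_{k,q}^\ast$ (for $q=1,\dots,m$, with $\Phi_{k,q}^\ast=v_q^\ast\varphi_k^\ast$) to be bounded below. Concretely, write
\[
\Psi_{k,j}=\sum_{q,p,v}\bigl(\Psi_{k,j},\varphi_{p,q,v}\bigr)\varphi_{p,q,v}^\ast
\]
where the biorthogonal expansion coefficients are $(\Psi_{k,j},\varphi_{p,q,v})$ (using that (20) is a Riesz basis with biorthogonal system (21), so $\Psi_{k,j}=\sum (\Psi_{k,j},\varphi_{p,q,v})\,\varphi_{p,q,v}^\ast$ after the appropriate relabelling; one must be a little careful about which system is "primal" but the estimates in (44)--(45) are stated exactly for the pairings $(\Psi_{k,j},\varphi_{p,q,s}^\ast)$, so I will work with the expansion of $\Psi_{k,j}$ in the basis $\{\varphi_{p,q,v}\}$ with coefficients $(\Psi_{k,j},\varphi_{p,q,v}^\ast)$). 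Since $\|\Psi_{k,j}\|=1$ and (20)--(21) are Riesz bases, there is a constant $c$ with $\sum_{p,q,v}|(\Psi_{k,j},\varphi_{p,q,v}^\ast)|^2\ge c$.

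Next I would bound the tail. By (44), the finitely many terms with $|p|\le n_0$ contribute $O(k^{-4})$ to the squared $\ell^2$ norm; by the first inequality in (45), $\sum_{p\neq\pm k,\ |p|>n_0}|(\Psi_{k,j},\varphi_{p,q}^\ast)|^2\le c_6^2\sum_{p\neq\pm k}\bigl(|\,|p|-|k|\,|(|p|+|k|)\bigr)^{-2}=O(k^{-2})$ (the sum behaves like $k^{-2}\sum_{r\ge1}r^{-2}$), and by the second inequality in (45), $|(\Psi_{k,j},\varphi_{-k,q}^\ast)|^2=O(k^{-2})$. Hence all of these together contribute $o(1)$, so for $|k|$ large enough
\[
\sum_{q=1}^m\bigl|(\Psi_{k,j},\varphi_{k,q}^\ast)\bigr|^2\ge \tfrac{c}{2}.
\]
Therefore there is an index $q=q(k,j)$ with $|(\Psi_{k,j},\varphi_{k,q}^\ast)|\ge\sqrt{c/(2m)}$. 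Now $\varphi_{k,q}^\ast=(0,\dots,\varphi_k^\ast,\dots,0)^T$ and the candidate root functions of $(L_m(C))^\ast$ at level $k$ are $\Phi_{k,i}^\ast=v_i^\ast\varphi_k^\ast$, $i=1,\dots,m$; since $\{v_1^\ast,\dots,v_m^\ast\}$ is a basis of $\mathbb C^m$ (the $\mu_i$ are simple), the standard basis vector $e_q$ expands as $e_q=\sum_i\gamma_i v_i^\ast$ with coefficients bounded independently of $k$, so $\varphi_{k,q}^\ast=\sum_i\overline{\gamma_i}\,\Phi_{k,i}^\ast$ — wait, more carefully: $(\Psi_{k,j},\varphi_{k,q}^\ast)=\sum_i \gamma_i^\sharp (\Psi_{k,j},\Phi_{k,i}^\ast)$ for some fixed scalars $\gamma_i^\sharp$ depending only on the matrix $C$. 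Consequently $\max_i|(\Psi_{k,j},\Phi_{k,i}^\ast)|\ge c_8$ for a suitable $c_8>0$, and relabelling the index $i$ as $j$ (i.e. choosing $\Phi_{k,j}^\ast$ to be that $\Phi_{k,i}^\ast$) gives (53).

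The main obstacle is the bookkeeping in the second step: one must be sure the index $p=-k$ is genuinely excludable, which is why (43)/(45) single it out with the weaker $O(k^{-1})$ estimate rather than the $O(k^{-2})$-type estimate — a single term of size $O(k^{-1})$ contributes only $O(k^{-2})$ to the squared norm, so it is harmless, but this must be said explicitly. A secondary subtlety is the direction of the biorthogonal expansion (expanding $\Psi_{k,j}$ in $\{\varphi_{p,q,v}\}$ versus in $\{\varphi_{p,q,v}^\ast\}$); one should note that the Riesz-basis lower frame bound gives $\sum|(\Psi_{k,j},\varphi_{p,q,v}^\ast)|^2\gtrsim\|\Psi_{k,j}\|^2$ regardless, because the dual of a Riesz basis is again a Riesz basis with the reciprocal bounds. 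Everything else — the passage from $\varphi_{k,q}^\ast$ to the $\Phi_{k,i}^\ast$ through the fixed change of basis in $\mathbb C^m$, and the final pigeonhole over $q$ and over $i$ — is routine.
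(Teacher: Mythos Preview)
Your proposal is correct and follows essentially the same route as the paper: expand $\Psi_{k,j}$ in the Riesz basis (20), use (44)--(45) to show the coefficients with $p\neq k$ are negligible, pigeonhole over $q$ to find a large $(\Psi_{k,j},\varphi_{k,q}^\ast)$, and then pass to the $\Phi_{k,i}^\ast$ via the fixed change of basis $\{v_i^\ast\}$ in $\mathbb{C}^m$. The only technical difference is that the paper bounds the $L_2^m$-norm of the tail \emph{function} (obtaining the approximation $\Psi_{k,j}=\sum_{q}(\Psi_{k,j},\varphi_{k,q}^\ast)\varphi_{k,q}+O(\ln|k|/k)$, a formula reused later in Theorem~2), whereas you bound the $\ell^2$-norm of the tail \emph{coefficients} directly via the Riesz frame inequality.
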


\begin{proof}
Since (20) is a Riesz basis of $L_{2}^{m}\left[  0,1\right]  ,$ we have%
\begin{equation}
\Psi_{k,j}=\sum\limits_{q=1}^{m}\sum_{p:\left\vert p\right\vert \leq n_{0}%
}\sum\limits_{v=1}^{t(p)}c(k,j,p,q,v)\varphi_{p,q,v}+\sum\limits_{q=1}^{m}%
\sum_{p:\left\vert p\right\vert >n_{0}}\left(  \Psi_{k,j},\varphi_{p,q}^{\ast
}\right)  \varphi_{p,q}. \tag{54}%
\end{equation}
It follows from (25) and (45) that%
\begin{equation}
\sum\limits_{q=1}^{m}\sum_{\substack{p\neq k\\\left\vert p\right\vert >n_{0}%
}}\left\Vert \left(  \Psi_{k,j},\varphi_{p,q}^{\ast}\right)  \varphi
_{p,q}\right\Vert =O(\frac{\ln\left\vert k\right\vert }{k}). \tag{55}%
\end{equation}
On the other hand, arguing as in the estimation for the first summation of
(49) we get%
\[
\sum\limits_{q=1}^{m}\sum_{p:\left\vert p\right\vert \leq n_{0}}%
\sum\limits_{j=1}^{t(p)}\left\Vert c(k,s,p,q,j)\varphi_{p,q,j}\right\Vert
=O(\frac{1}{k^{2}}).
\]
Therefore using (55), (54), we obtain%
\begin{equation}
\Psi_{k,j}=\sum\limits_{q=1}^{m}\left(  \Psi_{k,j},\varphi_{k,q}^{\ast
}\right)  \varphi_{k,q}+O(\frac{\ln\left\vert k\right\vert }{k}) \tag{56}%
\end{equation}
Since $\left\{  \varphi_{k,1},\varphi_{k,2},....,\varphi_{k,m}\right\}  $ is
orthonormal system and $\left\Vert \Psi_{k,j}\right\Vert =1$, there exists an
index $q$ such that
\begin{equation}
\left\vert \left(  \Psi_{k,j},\varphi_{k,q}^{\ast}\right)  \right\vert >c_{9}.
\tag{57}%
\end{equation}
On the other hand%
\begin{equation}
\varphi_{k,q}^{\ast}=\sum\limits_{j=1}^{m}\left(  \varphi_{k,q}^{\ast}%
,\Phi_{k,j}\right)  \Phi_{k,j}^{\ast} \tag{58}%
\end{equation}
because $\Phi_{k,j}^{\ast}=v_{j}^{\ast}\varphi_{k}^{\ast}$, and the vectors
$v_{j}^{\ast},$ $j=1,2,...m$ form a basis in $\mathbb{C}^{m}.$ Now, using (58)
in (57), we get the proof of the lemma.
\end{proof}

\begin{theorem}
Suppose that all eigenvalues $\mu_{1},\mu_{2},...,\mu_{m}$ of the matrix $C$
are simple. Then, there exists a number $N$ such that all eigenvalues
$\lambda_{k,1},\lambda_{k,2},...,\lambda_{k,m}$ of $L_{m}\left(  Q\right)  $
for $\mid k\mid\geq N$ are simple and satisfy the asymptotic formula
\begin{equation}
\lambda_{k,j}=\mu_{k,j}+O(\alpha_{k})+O(\frac{\ln\left\vert k\right\vert }%
{k}), \tag{59}%
\end{equation}
where $\mu_{k,j}$ is the eigenvalue of $L_{m}\left(  C\right)  $ and
$\alpha_{k}$ is defined in Lemma 1. The normalized eigenfunction $\Psi
_{k,j}(x)$ of $L_{m}(Q)$ corresponding to $\lambda_{k,j}$ satisfies%
\begin{equation}
\Psi_{k,j}(x)=\Phi_{k,j}(x)+O(\alpha_{k})+O(\frac{\ln\left\vert k\right\vert
}{k}), \tag{60}%
\end{equation}
where $\Phi_{k,j}(x)$ is the normalized eigenfunction of $L_{m}\left(
C\right)  $ corresponding to $\mu_{k,j}.$ The root functions of $L_{m}\left(
Q\right)  $ form a \ Riesz basis in $L_{2}^{m}(0,1).$
\end{theorem}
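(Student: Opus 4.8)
\emph{Overall plan and the eigenvalue asymptotics.}
The plan is to read off (59) from identity (36) together with Lemmas 1 and 2, to obtain (60) from the expansion (56) and (36), and finally to deduce the Riesz basis property from Bari's theorem. Write $\varepsilon_{k}=\alpha_{k}+\ln|k|/|k|\to0$, and fix $|k|\geq N$ with $N$ large and a normalized eigenfunction $\Psi_{k,j}$, $L_{m}(Q)\Psi_{k,j}=\lambda_{k,j}\Psi_{k,j}$. By Lemma 2 there is an eigenfunction of $(L_{m}(C))^{\ast}$, which after a ($k$-dependent) relabelling of $\mu_{1},\dots,\mu_{m}$ we may call $\Phi_{k,j}^{\ast}$, with $|(\Psi_{k,j},\Phi_{k,j}^{\ast})|>c_{8}$. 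Since $((Q-C)\Psi_{k,j},\Phi_{k,j}^{\ast})=(\Psi_{k,j},(Q^{\ast}-C^{\ast})\Phi_{k,j}^{\ast})$, putting $i=j$ in (36) and applying Lemma 1 gives $(\lambda_{k,j}-\mu_{k,j})(\Psi_{k,j},\Phi_{k,j}^{\ast})=O(\varepsilon_{k})$, and dividing by $(\Psi_{k,j},\Phi_{k,j}^{\ast})$ yields (59). As $\mu_{k,j}=\rho_{k}+\mu_{j}$ with $\mu_{j}=O(1)$, this also shows $\nu_{k,j}:=\lambda_{k,j}-\rho_{k}$ is bounded, so (36) may now be used for all indices $i$ without the rough term $O(k^{1-1/m})$ of (16) causing harm.

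\emph{The eigenfunction asymptotics.}
By (56), $\Psi_{k,j}=\mathbf{w}\,\varphi_{k}+O(\ln|k|/|k|)$, where $\mathbf{w}=((\Psi_{k,j},\varphi_{k,1}^{\ast}),\dots,(\Psi_{k,j},\varphi_{k,m}^{\ast}))\in\mathbb{C}^{m}$ and, because $\{\varphi_{k,1},\dots,\varphi_{k,m}\}$ is orthonormal and $\|\Psi_{k,j}\|=1$, $|\mathbf{w}|=1+O(\ln|k|/|k|)$. Using $\Phi_{k,i}^{\ast}=v_{i}^{\ast}\varphi_{k}^{\ast}$, the relation $(\varphi_{k},\varphi_{k}^{\ast})=1$ from (26), and the boundedness of $\nu_{k,j}$, identity (36) with Lemma 1 becomes $(\nu_{k,j}-\mu_{i})\langle\mathbf{w},v_{i}^{\ast}\rangle=O(\varepsilon_{k})$ for every $i$. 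By the first part $\nu_{k,j}$ lies within $O(\varepsilon_{k})$ of exactly one $\mu_{i}$, namely $\mu_{j}$, the others being a fixed distance away; hence $\langle\mathbf{w},v_{i}^{\ast}\rangle=O(\varepsilon_{k})$ for $i\neq j$, and, after adjusting the unimodular factor in $\Psi_{k,j}$, $\langle\mathbf{w},v_{j}^{\ast}\rangle=1+O(\varepsilon_{k})$. Expanding $\mathbf{w}$ in the basis $\{v_{1},\dots,v_{m}\}$ of $\mathbb{C}^{m}$, which is biorthogonal to $\{v_{1}^{\ast},\dots,v_{m}^{\ast}\}$ since the $\mu_{i}$ are distinct, gives $\mathbf{w}=v_{j}+O(\varepsilon_{k})$, so $\Psi_{k,j}=v_{j}\varphi_{k}+O(\varepsilon_{k})=\Phi_{k,j}+O(\varepsilon_{k})$, which is (60).

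\emph{Simplicity and the right pairing.}
To see that $\lambda_{k,1},\dots,\lambda_{k,m}$ are simple for $|k|\geq N$ and that the assignment $j\mapsto$ (index of the eigenvalue of $C$ nearest to $\nu_{k,j}$) is a bijection, I would run the same scheme for the adjoint $(L_{m}(Q))^{\ast}=L_{m}(Q^{\ast})$, whose boundary conditions are again strongly regular and whose constant part $C^{\ast}$ has the simple eigenvalues $\overline{\mu_{i}}$; its eigenfunctions corresponding to the large-index cluster eigenvalues then have the form $v_{i}^{\ast}\varphi_{k}^{\ast}+O(\varepsilon_{k})$. By (16), $L_{m}(Q)$ has exactly $m$ eigenvalues, counted with algebraic multiplicity, in that cluster. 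If some $\mu_{k,i}$ were not $O(\varepsilon_{k})$-close to any of them, the adjoint eigenfunction near $v_{i}^{\ast}\varphi_{k}^{\ast}$ would — using $\langle v_{a},v_{b}^{\ast}\rangle=\delta_{ab}$, $(\varphi_{k},\varphi_{k'}^{\ast})=\delta_{kk'}$ for $|k|,|k'|>n_{0}$, and Riemann--Lebesgue against the finitely many root functions of small index — be nearly orthogonal to the entire root system of $L_{m}(Q)$, contradicting biorthogonality of the two root systems. Hence each $\mu_{k,i}$ is close to exactly one $\lambda_{k,j}$, the pairing is a bijection, and two of the $\lambda_{k,j}$ being equal, or one of them carrying an associated function, would then force two distinct $\mu_{i}$ to coincide. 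This simplicity/bijectivity step is the delicate point; a Rouch\'e-type comparison of $L_{m}(Q)$ with $L_{m}(C)$ on circles of fixed small radius about each $\mu_{k,i}$ would serve equally well.

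\emph{The Riesz basis property.}
By (60), $\|\Psi_{k,j}-\Phi_{k,j}\|^{2}=O(\alpha_{k}^{2})+O((\ln|k|/|k|)^{2})$; the numbers $\alpha_{k}$ are, apart from finitely many, bounded by moduli of Fourier coefficients of the $L_{2}$-entries $b_{s,i}$ of $Q$ (see (47)), so $\sum_{k}\alpha_{k}^{2}<\infty$, and $\sum_{k}(\ln|k|/|k|)^{2}<\infty$; hence $\sum_{|k|\geq N}\sum_{j=1}^{m}\|\Psi_{k,j}-\Phi_{k,j}\|^{2}<\infty$. The root functions of $L_{m}(C)$ — the functions $v_{i}\varphi_{n}$, together with $v_{i}\varphi_{n}^{(p)}$ for the finitely many $n$ with associated functions — form a Riesz basis of $L_{2}^{m}[0,1]$, being the tensor product of the Riesz basis of the root functions of $L_{1}(0)$ with the basis $\{v_{1},\dots,v_{m}\}$ of $\mathbb{C}^{m}$. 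Thus the root system of $L_{m}(Q)$ — the $\Psi_{k,j}$ for $|k|\geq N$, together with the finitely many root functions for $|k|<N$ matched in an arbitrary way with the remaining root functions of $L_{m}(C)$ — is quadratically close to a Riesz basis; being the root system of an operator with discrete spectrum, it is minimal (biorthogonal to the root system of $(L_{m}(Q))^{\ast}$). By Bari's theorem (p. 310 of [2]) it is therefore a Riesz basis of $L_{2}^{m}(0,1)$.
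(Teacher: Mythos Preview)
Your derivations of (59) and (60) follow the paper's route almost verbatim: identity (36) with $i=j$ together with Lemmas~1 and~2 gives (59), and (56) combined with (36) for $i\neq j$ gives (60). The paper phrases the second step slightly differently---it bounds $(\Psi_{k,j},\Phi_{k,i}^{\ast})$ directly for $i\neq j$ rather than passing through your vector $\mathbf{w}$---but the content is identical.

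The simplicity/bijection step, which you yourself flag as delicate, is where your write-up has a real gap. You argue that if some $\mu_{k,i}$ were unmatched then ``the adjoint eigenfunction near $v_{i}^{\ast}\varphi_{k}^{\ast}$'' would be nearly orthogonal to the root system of $L_{m}(Q)$; but if no eigenvalue of $L_{m}(Q)$ lies near $\mu_{k,i}$, then no eigenvalue of $(L_{m}(Q))^{\ast}$ lies near $\overline{\mu_{k,i}}$ either, so there is no such adjoint eigenfunction to invoke. The paper's argument avoids this: once (60) and its adjoint analogue (63) are in hand, it observes that if a disk $\Delta_{k,j}$ contained two distinct eigenvalues $\Lambda_{1}\neq\Lambda_{2}$, then $(\Psi_{1},\Psi_{2}^{\ast})=0$, while (60) and (63) force $(\Psi_{1},\Psi_{2}^{\ast})=(\Phi_{k,j},\Phi_{k,j}^{\ast})+o(1)=1+o(1)$; and an associated function would similarly force $(\Psi_{k,j},\Psi_{k,j}^{\ast})=0$. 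Since (16) supplies exactly $m$ eigenvalues in the $k$-th cluster and there are $m$ pairwise disjoint disks, pigeonhole then gives exactly one simple eigenvalue per disk. This is essentially the Rouch\'e-free version of the alternative you mention.

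For the Riesz basis you take the ``quadratically close to a Riesz basis'' route, whereas the paper shows that the root system and its biorthogonal companion are both complete and Bessel and then invokes that form of Bari's theorem. Both approaches are legitimate; yours is arguably more direct once (60) is known and $\sum_{k}\alpha_{k}^{2}<\infty$ is observed. One caution: in the quadratic-closeness version of Bari's theorem the side hypothesis is $\omega$-linear independence or, equivalently here via a Fredholm argument, completeness---not mere minimality. Completeness of the root system of $L_{m}(Q)$ is available from the Shkalikov results the paper cites as [11,12], so you should appeal to that rather than to the existence of a biorthogonal system.
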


\begin{proof}
In (36) replacing $i$ by $j,$ and then dividing the both sides of the obtained
equality by $(\Psi_{k,j},\Phi_{k,j}^{\ast})$ and using Lemma 1 and Lemma 2, we
see all large eigenvalues of $L_{m}\left(  Q\right)  $ lie in $r_{k}$
neighborhood of the eigenvalues $\mu_{k,j}$ for $\mid k\mid\geq N,$
$j=1,2,...,m$ of $L_{m}(C),$ where%
\begin{equation}
r_{k}=O(\alpha_{k})+O(\frac{\ln\left\vert k\right\vert }{k}). \tag{61}%
\end{equation}

Now we prove that these eigenvalues are simple. Let $\lambda_{k,j}$ be an
eigenvalue of $L_{m}\left(  Q\right)  $ lying in $\frac{1}{2}a_{j}$
neighborhood of $\mu_{k,j}=\rho_{k}+\mu_{j}$ (see (35)), where $a_{j}%
=\min_{i\neq j}\mid\mu_{j}-\mu_{i}\mid.$ Then, by triangle inequality%
\[
\mid\lambda_{k,j}-\mu_{k,i}\mid>\mid\mu_{k,j}-\mu_{k,i}\mid-\mid\lambda
_{k,j}-\mu_{k,j}\mid\geq a_{j}-\frac{1}{2}a_{j}=\frac{1}{2}a_{j}%
\]
for $i\neq j.$ Therefore using (36) and Lemma 1 we get%
\[
(\Psi_{k,j},\Phi_{k,i}^{\ast})=O\left(  \alpha_{k}\right)  +O(\frac
{\ln\left\vert k\right\vert }{k})
\]
for $i\neq j$. This and (56) imply that (60) holds for any normalized
eigenfunction corresponding to $\lambda_{k,j},$ since%
\[
span\left\{  \varphi_{k,1},\varphi_{k,2},...,\varphi_{k,m}\right\}
=span\{\Phi_{k,1,},\Phi_{k,2,},...,\Phi_{k,m,}\}.
\]
Using this, let us prove that $\lambda_{k,j}$ is a simple eigenvalue. Suppose
to the contrary that $\lambda_{k,j}$ is a multiple eigenvalue. If there are
two linearly independent eigenfunctions corresponding to $\lambda_{k,j},$ then
one can find two orthogonal eigenfunctions satisfying (60), which is
impossible. Hence there exists a unique eigenfunction $\Psi_{k,j}$
corresponding to $\lambda_{k,j}.$ If there exists an associated function
$\Psi_{k,j,1}$ belonging to the eigenfunction $\Psi_{k,j},$ then%
\[
(L_{m}(Q)-\lambda_{k,j})\Psi_{k,j,1}(x)=\Psi_{k,j}(x).
\]
Multiplying both sides of this equality by $\Psi_{k,j}^{\ast}(x),$ where
$\Psi_{k,j}^{\ast}(x)$ is the normalized eigenfunction of $(L_{m}(Q))^{\ast}$
corresponding to the eigenvalue $\overline{\lambda_{k,j}},$we obtain%
\begin{equation}
(\Psi_{k,j},\Psi_{k,j}^{\ast})=(\Psi_{k,i,1},((L_{m}(Q))^{\ast}-\overline
{\lambda_{k,j}}))\Psi_{k,j}^{\ast})=0. \tag{62}%
\end{equation}
Since the proved statements are also applicable for the adjoint operator
$(L_{m}(Q))^{\ast},$ formula (60) holds for this operator too, that is, we
have%
\begin{equation}
\Psi_{k,j}^{\ast}(x)=\Phi_{k,j}^{\ast}(x)+O\left(  \alpha_{k}\right)
+O(\frac{\ln\left\vert k\right\vert }{k}). \tag{63}%
\end{equation}
This formula, (60) and the obvious relation $(\Phi_{k,j},\Phi_{k,j}^{\ast})=1$
contradict with (62). Thus, $\lambda_{k,j}$ is a simple eigenvalue.

We proved that all large eigenvalues of $L_{m}\left(  Q\right)  $ lie in the
disk%
\[
\Delta_{k,j}=\left\{  z:\left\vert z-\mu_{k,j}\right\vert <r_{k}\right\}
\]
for $\mid k\mid\geq N$, $j=1,2,...,m,$ where $r_{k}$ is defined in (61).
Clearly, the disks $\Delta_{k,j}$ for $j=1,2,...,m$ and $\mid k\mid\geq N$ are
pairwise disjoint. Let us prove that each of these disks does not contain more
than one eigenvalue of \ $L_{m}\left(  Q\right)  .$ Suppose to the contrary
that, two different eigenvalues $\Lambda_{1}$ and $\Lambda_{2}$ lie in
$\Delta_{k,j}.$ Then it has already been proven that these eigenvalues are
simple and the corresponding eigenfunctions $\Psi_{1}$ and $\Psi_{2}$ satisfy%
\[
\Psi_{p}(x)=\Phi_{k,j}(x)+O\left(  \alpha_{k}\right)  +O(\frac{\ln\left\vert
k\right\vert }{k})
\]
for $p=1,2.$ Similarly, the eigenfunctions $\Psi_{1}^{\ast}$ and $\Psi
_{2}^{\ast}$ of $(L_{m}\left(  Q\right)  )^{\ast}$ corresponding to the
eigenvalues $\overline{\Lambda_{1}}$ and $\overline{\Lambda_{2}}$ satisfy%
\[
\Psi_{p}^{\ast}(x)=\Phi_{k,j}^{\ast}(x)+O\left(  \alpha_{k}\right)
+O(\frac{\ln\left\vert k\right\vert }{k}).
\]
for $p=1,2.$ Since $\Lambda_{1}\neq\Lambda_{2},$ we have%
\[
0=(\Psi_{1},\Psi_{2}^{\ast})=1+O\left(  \alpha_{k}\right)  +O(\frac
{\ln\left\vert k\right\vert }{k})
\]
which is impossible. Hence the pairwise disjoint disks $\Delta_{k,1},$
$\Delta_{k,2},...,$ $\Delta_{k,m}$ , where $|k|\geq N,$ contain $m$
eigenvalues of $L_{m}(Q)$ and each of these disks does not contain more than
one eigenvalue. Therefore, there exists a unique eigenvalue $\lambda_{k,j}$ of
$L_{m}(Q)$ lying in $\Delta_{k,j},$ where $j=1,2,...,m$ and $|k|\geq N.$ Thus
the eigenvalues $\lambda_{k,j}$ for $|k|\geq N$ are simple and the formulas
(59) and (60) hold.

It remains to prove that the root functions of $L_{m}\left(  Q\right)  $ form
a\ Riesz basis in $L_{2}^{m}(0,1).$ For this, let us prove that for $f\in
L_{2}^{m}(0,1)$, the following series is convergent%
\begin{equation}
\sum_{j=1}^{m}\sum\limits_{k=N+1}^{\infty}\left\vert (f,\Psi_{k,j})\right\vert
^{2}<\infty, \tag{64}%
\end{equation}
where $N$ is a large positive number. By the asymptotic formula (60), we have%
\begin{equation}
\sum\limits_{k=N+1}^{\infty}\left\vert (f,\Psi_{k,j})\right\vert ^{2}%
\leq3(\sum\limits_{k=N+1}^{\infty}\left\vert (f,\Phi_{k,j})\right\vert
^{2}+\sum\limits_{k=N+1}^{\infty}\left\vert (f,g_{k})\right\vert ^{2}%
+\sum\limits_{k=N+1}^{\infty}\left\vert (f,h_{k})\right\vert ^{2}) \tag{65}%
\end{equation}
where $\left\Vert g_{k}\right\Vert =O\left(  \alpha_{k}\right)  $ and
$\left\Vert h_{k}\right\Vert =O(\frac{\ln\left\vert k\right\vert }{k}).$ The
first series in the right side of (65) converges, since the root functions of
$L_{m}\left(  C\right)  $ is a Riesz basis in $L_{2}^{m}(0,1).$ Using the
Cauchy-Schwarz inequality we get%
\begin{equation}
\sum\limits_{k=N+1}^{\infty}\left\vert (f,g_{k})\right\vert ^{2}\leq
c_{10}\left\Vert f\right\Vert ^{2}\sum\limits_{k=N+1}^{\infty}\left\vert
\alpha_{k}\right\vert ^{2}. \tag{66}%
\end{equation}
On the other hand, using the definition of $\alpha_{k}$ (see Lemma 1) and
taking into account that the entries of the matrix $Q$ are the element of
$L_{2}\left(  0,1\right)  ,$ we obtain%
\[
\sum\limits_{k=N+1}^{\infty}\left\vert \alpha_{k}\right\vert ^{2}<\infty.
\]
Therefore, by (66), the second series in the right side of (65) converges too.
In the same way, we prove that the third series in the right side of (65)
converges. Thus, the series of the left-hand side of (65) converges, that is,
(64) is proved. By Bari's definition (see [2], chap. 6), this implies that the
system of eigenfunctions of the operator under consideration is Bessel. Since
the system of root functions of the adjoint operator has the asymptotics (63),
in the same way, we obtain that it is also Bessel. Moreover, the equality%
\[
(\Psi_{k,j},\Psi_{k,j}^{\ast})=1+O\left(  \alpha_{k}\right)  +O(\frac
{\ln\left\vert k\right\vert }{k})
\]
(see (60) and (63)) implies that the system of the root functions of $\left(
L_{m}(Q)\right)  ^{\ast},$ which is biorthogonal to the system of the root
functions of $L_{m}(Q),$ is also Bessel. As it is noted in [11,12], the system
of root functions of the operators $L_{m}(Q)$ and $\left(  L_{m}(Q)\right)
^{\ast}$ are complete in the space $L_{2}^{m}(0,1)$. These arguments and
Bari's theorem in p.310 of [2] (if two biorthogonal systems are complete and
Bessel, then they both are Riesz bases) conclude the proof of the theorem.
\end{proof}

\bigskip

\bigskip
\end{document}